\documentclass[12pt]{amsart}
\usepackage{amssymb,verbatim,enumerate,ifthen}
\usepackage[mathscr]{eucal}
\usepackage[utf8]{inputenc}
\usepackage[T1]{fontenc}
\usepackage{esint}
\usepackage{marginnote}
\usepackage{amsmath, amsthm}
\usepackage{lipsum}

\newtheorem{alphthm}{Theorem}

\newtheorem{thm}{Theorem}[section]

\newtheorem{prop}[thm]{Proposition}
\newtheorem{lem}[thm]{Lemma}

\theoremstyle{definition}

\newtheorem{exa}{Example}

\DeclareMathOperator{\conv}{conv}

\numberwithin{equation}{section}
\def\eq#1{{\rm(\ref{#1})}}
\def\Eq#1#2{\ifthenelse{\equal{#1}{*}}
  {\begin{equation*}\begin{aligned}[]#2\end{aligned}\end{equation*}}
  {\begin{equation}\begin{aligned}[]\label{#1}#2\end{aligned}\end{equation}}}

\def\QA#1#2{\mathscr{A}_{#1}^{[#2]}}

\newcommand{\floor}[1]{\left\lfloor #1 \right\rfloor}

\def\calI{\mathcal{I}}

\newcommand\R{\mathbb{R}}
\newcommand\N{\mathbb{N}}

\newcommand\calF{\mathcal{F}}

\DeclareMathOperator{\interior}{int}

\title[Equality problem for generalized quasiarithmetic means...]{Equality problem for generalized quasiarithmetic means generated by discontinuous strictly monotonic functions}

\author[Tibor Kiss]{Tibor Kiss}
\address{%
Institute of Mathematics, 
University of Debrecen,
Pf.\ 400, 4002 Debrecen, Hungary}
\email{kiss.tibor@science.unideb.hu}
\author{Pawe{\l} Pasteczka}
\address{Institute of Mathematics, University of Rzesz\'ow, Pigonia 1, 35-310 Rzesz\'ow, Poland}
\email{ppasteczka@ur.edu.pl}
\keywords{Generalized quasiarithmetic means, Equality problem of means, Strictly monotonic functions, Functional equations}

\subjclass[2010]{Primary: 26E60;
Secondary: 39B22, 26A48}
\begin{document}

\begin{abstract}
We study the equality problem of generalized quasiarithmetic means for a strictly monotonic generator $f$ that is not necessarily continuous. We provide two sufficient conditions that lead to a conclusion analogous to the result of Páles and Pasteczka. We show through an example that in our case, without any extra conditions, the generator functions cannot be expected to be affine transformations of each other over the whole domain. In the remaining case, we consider the appropriate inverses of the functions, which implies that the scaling factor must coincide across the various regions of continuity.
\end{abstract}

\maketitle

\section{Introduction}
A function $M:\bigcup_{n=1}^\infty I^n\to I$ is called a \emph{mean on $I$} (here and below  $I\subseteq\R$ stands for a subinterval of positive length) if, for any $n\in\N$ and $x\in I^n$, the chain of inequalities
\Eq{*}{\min x\leq M(x)\leq \max(x)}
is satisfied. Among means, an actively researched family is the class of \emph{quasiarithmetic means} -- namely, functions $M$ for which there exists a strictly monotonic, continuous function $f:I\to\R$ such that
\Eq{*}{M(x)=f^{-1}\Big(\frac{f(x_1)+\dots+f(x_n)}n\Big)=:A_f(x),
\qquad n\in\N \text{ and }x=(x_1,\dots,x_n)\in I^n.}

The function $f$ is referred to as the generator of the mean. For a given $n\in \N$, we shall denote the restriction $A_f|_{I^n}$ by $A_f^{[n]}$. The framework for this family has been thoroughly treated in the literature, most notably in \cite{HarLitPol34} (as well as \cite{Bul03}, \cite{BulMitVas88}, and \cite{MitPecFin93}). 

Now we formulate a fundamental result concerning the equality of quasiarithmetic means.

\begin{alphthm}\label{thm:eq_qam}
Let $f,g:I\to\R$ be continuous, strictly monotonic functions. Then the following statements are pairwise equivalent.
\begin{enumerate}[(i)]
\item There exists $n\in\N\setminus\{1\}$ such that, for all $x\in I^n$, we have $A_f^{[n]}(x)=A_g^{[n]}(x)$.
\item For all $n\in\N$ and $x\in I^n$, we have $A_f^{[n]}(x)=A_g^{[n]}(x)$.
\item There exist constants $\alpha,\beta\in\R$ with $\alpha\neq 0$, such that $g=\alpha f+\beta$.
\end{enumerate}
\end{alphthm}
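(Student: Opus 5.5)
The plan is to prove the cycle (iii)$\Rightarrow$(ii)$\Rightarrow$(i)$\Rightarrow$(iii).

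\emph{(iii)$\Rightarrow$(ii).} If $g=\alpha f+\beta$ with $\alpha\neq0$, then $g^{-1}(y)=f^{-1}\bigl((y-\beta)/\alpha\bigr)$ on $g(I)$. Consequently
\Eq{*}{A_g(x)=f^{-1}\Big(\tfrac{1}{\alpha}\Big(\tfrac{\alpha(f(x_1)+\dots+f(x_n))+n\beta}{n}-\beta\Big)\Big)=A_f(x).}

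\emph{(ii)$\Rightarrow$(i).} This is trivial: take $n=2$.

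\emph{(i)$\Rightarrow$(iii).} This is the substantial direction. Fix $n\ge2$ with $A_f^{[n]}=A_g^{[n]}$. First I reduce to the two-variable mean. For $x,y\in I$, put $m=A_f^{[2]}(x,y)$ and apply (i) to the vector $(x,y,m,\dots,m)$, which has $n-2$ copies of $m$. Since $f(m)=\tfrac{f(x)+f(y)}2$, the $f$-mean of this vector is $m$. Hence its $g$-mean is also $m$, which gives $g(x)+g(y)+(n-2)g(m)=n\,g(m)$, that is, $g(m)=\tfrac{g(x)+g(y)}2$. So $A_f^{[2]}=A_g^{[2]}$.

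Next put $h:=g\circ f^{-1}$ on the interval $J=f(I)$. It is continuous and strictly monotonic, and the previous step says exactly that $h$ satisfies Jensen's equation:
\Eq{*}{h\Big(\frac{u+v}2\Big)=\frac{h(u)+h(v)}2,\qquad u,v\in J.}

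The key remaining step is the classical fact that a continuous Jensen function on an interval is affine. I argue as follows. Fix $u_0<v_0$ in $J$ and let $\ell$ be the affine function agreeing with $h$ at $u_0$ and $v_0$. By induction using midpoints, $h=\ell$ at all dyadic points of $[u_0,v_0]$, and continuity extends this to all of $[u_0,v_0]$. Exhausting $J$ by such intervals, and noting that overlapping intervals force the same affine function, gives $h(t)=\alpha t+\beta$ on $J$. Moreover $\alpha\neq0$ because $h$ is strictly monotonic.

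Finally, $g=h\circ f=\alpha f+\beta$. The only point needing care is the construction of the padded vector in the reduction to $n=2$; the rest is standard.
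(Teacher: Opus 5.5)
Your proof is correct and complete. The paper gives no proof of Theorem~\ref{thm:eq_qam}: it quotes it as a classical result, with references to Hardy--Littlewood--P\'olya and the other monographs. It then uses it as a black box, for instance in Proposition~\ref{prop:1} to obtain local affinity of $g\circ f^{-1}$ near points of continuity. So there is no in-paper argument to compare yours against.

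What you give is the standard argument. Padding with $n-2$ copies of $m=A_f^{[2]}(x,y)$ is the continuous-case instance of a fact the paper cites from P\'ales--Pasteczka (Corollary 4.1 there): equality of the $n$-variable means forces equality for every smaller number of variables. The rest is the classical solution of Jensen's equation for the continuous function $g\circ f^{-1}$, via dyadic points and density.

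The one step you leave implicit is that $g(I)$ is an interval, because $g$ is continuous. Hence $\frac1n\bigl(g(x)+g(y)+(n-2)g(m)\bigr)\in g(I)$, where $g^{-1}$ is a genuine two-sided inverse. This is what lets you apply $g$ to the identity $g^{-1}(\cdot)=m$, and the same remark justifies (iii)$\Rightarrow$(ii).
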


In a 2020 paper \cite{GruPal20}, Grünwald and Páles introduced a generalized left inverse $f^{(-1)}:\mathrm{conv}(f(I))\to I$ for a strictly monotonic function $f$ that is not necessarily continuous as the unique monotonic extension of the standard inverse (see Lemma~\ref{lem:GruPal} below). It allows to extend the family of quasiarithmetic means. Further results concerning this generalized inverse can also be found in \cite{PalPas26}.

We say that a mean $M:\bigcup_{n=1}^{\infty} I^n\to I$ is a \emph{generalized quasiarithmetic mean} if there exists a strictly monotonic function $f:I\to\R$ such that
\Eq{*}{
M(x)=f^{(-1)}\Big(\frac{f(x_1)+\dots+f(x_n)}n\Big)=:\mathscr{A}_f(x),
\qquad n\in\N \text{ and }x=(x_1,\dots,x_n)\in I^n.}

Similarly to the continuous case, the function $f$ is also referred to here as the generator of the generalized quasiarithmetic mean. For given $n\in \N$, the restriction of the function $\mathscr{A}_f$ to the set $I^n$ is denoted by $\QA{f}n$.

Clearly, if $f$ is continuous, then the generalized left inverse is simply the regular inverse, and hence the generalized quasiarithmetic mean is a regular one, that is $\mathscr{A}_f=A_f$. Furthermore, we have $\mathscr{A}_f=\mathscr{A}_{-f}$, and hence we may restrict our consideration to strictly increasing functions whenever convenient. 

Motivated by Theorem \ref{thm:eq_qam}, in their paper \cite{PalPas26}, Páles and Pasteczka addressed the equality problem of these generalized means. They arrived at an analogous result to that of the regular (continuous) case, but surprisingly, only under the assumption that the equality of the means in question holds for any number of variables. For the sake of exactness, we recall their result as well.

\begin{alphthm}\label{thm:eq_gqam}
Let $f,g:I\to\R$ be strictly monotonic functions. Then the following statements are pairwise equivalent.
\begin{enumerate}[(i)]
\item For all $n\in\N$ and $x\in I^n$, we have $\QA{f}n(x)=\QA{g}n(x)$.
\item There exist constants $\alpha,\beta\in\R$ with $\alpha\neq 0$, such that $g=\alpha f+\beta$.
\end{enumerate}
\end{alphthm}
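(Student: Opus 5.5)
Since the mean only depends on $f$ up to sign, I will assume throughout that $f$ and $g$ are strictly increasing. The direction (ii)$\Rightarrow$(i) is the easy one. For $g=\alpha f+\beta$, the set $\conv(g(I))$ is the affine image of $\conv(f(I))$. Moreover, the generalized left inverse is characterized in Lemma~\ref{lem:GruPal} as the unique monotone extension of $f^{-1}$. From this I expect $g^{(-1)}(\alpha t+\beta)=f^{(-1)}(t)$ for all $t\in\conv(f(I))$, because the right-hand side is a monotone extension of $g^{-1}$ on $\conv(g(I))$. Averages commute with affine maps, so $\QA{g}n=\QA{f}n$ for every $n$.

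For (i)$\Rightarrow$(ii), the plan is to reduce to a Jensen-type equation on $f(I)$. Set $h:=g\circ f^{-1}$ on $f(I)$; it is strictly increasing. The goal is to show that $h$ is the restriction of an affine function.

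The key observation concerns points $x\in I^n$ whose $f$-mean $m=\frac1n\sum f(x_i)$ lies in $f(I)$. For such $x$ we have $\QA{f}n(x)=f^{-1}(m)=:y$. By (i) this also equals $\QA{g}n(x)$. Since $g^{(-1)}$ is strictly increasing on $g(I)$, is constant only on the gaps of $g(I)$, and lands exactly on $y$, the $g$-mean $\frac1n\sum g(x_i)$ must lie in the closed gap-interval of $\conv(g(I))$ associated with $y$. That interval is $[g(y^-),g(y^+)]$, with the one-sided limits taken inside $g(I)$. If $g$ is continuous at $y$, this forces $\frac1n\sum h(f(x_i))=h(m)$.

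So we obtain $h\big(\frac1n\sum t_i\big)=\frac1n\sum h(t_i)$ whenever the $t_i\in f(I)$ have mean in $f(I)$ at which $h$ is continuous. Since $g$ is monotone, it has only countably many discontinuities, so this covers all but countably many means.

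Next I want to show that $h$ is affine. The idea is to use many variables. For any $u<v<w$ in $f(I)$, let $v$ be a continuity point and suppose $v=\lambda u+(1-\lambda)w$ with $\lambda$ rational, $\lambda=k/n$. Taking $k$ copies of $u$ and $n-k$ copies of $w$ gives $h(v)=\lambda h(u)+(1-\lambda)h(w)$. This is exactly where all $n$ are needed, since $\lambda$ has arbitrary denominators.

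Then I fix $u<w$ in $f(I)$ and let $L$ be the affine function through $(u,h(u))$ and $(w,h(w))$. The equation above gives $h=L$ at every point $v\in f(I)\cap(u,w)$ that is a continuity point of $h$ and divides $[u,w]$ in a rational ratio. To reach the remaining points, I would use monotonicity of $h$ together with density. The set $f(I)$ may have gaps, so the rational-ratio points of $f(I)$ need not be dense. I therefore expect it is better to vary the endpoints: choose pairs $(u',w')$ near $(u,w)$ such that a given $v$ divides $[u',w']$ rationally. Each such pair shows that $h$ is affine across triples, and varying the triples makes all the local affine functions coincide.

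The main obstacle is this final step, namely handling gaps of $f(I)$ together with discontinuity points of $h$. Here I would use the fact that, for a fixed $v$, the relation holds for all $u<v<w$ in $f(I)$ with $(v-u)/(w-u)$ rational, and that $f(I)$ is uncountable on at least one side of any point. By monotonicity of $h$ one can then pass to limits, which yields $h(t)=\alpha t+\beta$ on $f(I)$, that is, $g=\alpha f+\beta$ with $\alpha>0$.

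For the exceptional points where $g$ jumps, I would rerun the argument with the roles of $f$ and $g$ swapped. The gap interval then gives two-sided inequalities, and squeezing these pins down the value.
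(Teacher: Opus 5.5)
\textbf{There is a genuine gap in (i)$\Rightarrow$(ii).} This theorem is quoted in the paper from Páles and Pasteczka without proof, so I am judging your argument on its own. Your direction (ii)$\Rightarrow$(i) is fine.

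For (i)$\Rightarrow$(ii), the core of your plan extracts information only from vectors whose $f$-mean $\frac1n\sum f(x_i)$ lands exactly in $f(I)$, with rational weights. That set of configurations can be trivial. Take $f$ strictly increasing with $f(I)$ contained in a perfect set $P$ of reals that are linearly independent over $\mathbb{Q}$. Such $P$ exist by classical results of von Neumann and Mycielski, and you can build $f$ by composing a strictly increasing right inverse of the Cantor function with an increasing bijection of the Cantor set onto $P$. For nonconstant $x$, the relation $nf(y)=\sum_i f(x_i)$ is then a nontrivial rational linear relation among distinct elements of $P$. So the $f$-mean of a nonconstant vector never lies in $f(I)$, and your identity $h(\lambda u+(1-\lambda)w)=\lambda h(u)+(1-\lambda)h(w)$ is never available. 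Your later step, ``vary the endpoints and pass to limits by monotonicity'', then has nothing to start from. More generally, for fixed $u<w$ the rational-ratio points form a countable set, while $f(C_f)$ may be a Lebesgue-null Cantor-type set, so nothing forces them to meet. In such cases all the information sits in configurations where the $f$-mean falls into a gap $[f_-(y),f_+(y)]$. You reserve those only for a vague final squeezing step.

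\textbf{The missing idea is to pass from rational to real weights using the continuity of the generalized inverses} (Lemma~\ref{lem:GruPal}(i)). For $x,z\in I$ and $\lambda=k/n$, apply (i) to $k$ copies of $x$ and $n-k$ copies of $z$. This gives
\[
f^{(-1)}\big(\lambda f(x)+(1-\lambda)f(z)\big)=g^{(-1)}\big(\lambda g(x)+(1-\lambda)g(z)\big).
\]
Both sides are continuous in $\lambda$, so the identity holds for every real $\lambda\in[0,1]$.

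Now take $x<y<z$ with $y\in C_f\cap C_g$, which is a co-countable set. Choose the real $\lambda$ with $\lambda f(x)+(1-\lambda)f(z)=f(y)$. The left side then equals $y$, hence $\lambda g(x)+(1-\lambda)g(z)\in[g_-(y),g_+(y)]=\{g(y)\}$. So $(f(y),g(y))$ lies on the chord through $(f(x),g(x))$ and $(f(z),g(z))$, for all $x<z$ in $I$ and all $y\in C_f\cap C_g\cap(x,z)$.

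The set $C_f\cap C_g$ is uncountable in every subinterval. Two consequences follow:
\begin{enumerate}[(a)]
\item All points of $C_f\cap C_g$ satisfy a single relation $g=\alpha f+\beta$ with $\alpha>0$.
\item For arbitrary $x$, pick $z>x$ and two such points $y_1<y_2$ in $(x,z)$. The chord from $x$ to $z$ passes through two points of that line, so it is that line, and therefore $(f(x),g(x))$ lies on it as well.
\end{enumerate}
In this route the discontinuity points need no separate treatment. The gap information is used implicitly through the limit in $\lambda$.
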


The purpose of this paper is to determine a similar connection between the generator functions under the condition that equality is required only for a fixed number of variables. In our main theorem, we provide two sufficient conditions that lead to a conclusion analogous to the result of Páles and Pasteczka.

In what follows, we will study the problem when, for given strictly monotonic functions $f,g \colon I \to \R$ and fixed $n \in \N$, the equality $\QA{f}n(x_1,\dots,x_n)=\QA{g}n(x_1,\dots,x_n)$
is satisfied for all $x_1,\dots,x_n\in  I$. For the sake of brevity, we will refer to this simply as $\QA{f}n=\QA{g}n$. 

We show through an example that in our case, without any extra conditions, the generator functions cannot be expected to be affine transformations of each other. More precisely, we provide an example of two generators $f$ and $g$ that are not affine transformations of each other, yet there exists an $n\geq 2$ such that $\QA{f}n=\QA{g}n$ is valid; see Example~\ref{exa:2436} below.

Remarkably, due to \cite[Corollary 4.1]{PalPas26} we know that whenever $\QA{f}n=\QA{g}n$ holds for some $n \in \N$ then $\QA{f}m=\QA{g}m$ is valid for all $m \le n$. Taking into account the trivial inequality $\QA{f}1=\QA{g}1$, for every pair of strictly increasing functions $f,g \colon I\to \R$, we can define the maximal $n$ such that $\QA{f}n=\QA{g}n$. In view of Theorem~\ref{thm:eq_gqam}, $n=+\infty$ corresponds to the case when $f$ and $g$ are affine transformations of each other.

Therefore, it is reasonable to study whether, for a given pair $(f,g)$ of strictly increasing functions, there exists a finite number $n\in \N$ such that $\QA fn=\QA gn$ implies that $f$ and $g$ are affine transformations of each other. If so, to give an upper bound for its value.

The aim of this paper is to deliver a thorough study of this equality problem.

Let us underline that generalized quasiarithmetic means are a subclass of semideviation means  (cf. \cite{Pal89b}, \cite{PalPas19a}). 
Analogous problems within the family of Gini means \cite{Gin38} were studied by Losonczi \cite{Los70a}. For the Bajrakarevi\'c means \cite{Baj58}, this problem was studied in Acz\'el--Dar\'oczy \cite{AczDar63c}. Let us mention that the case of the equality of bivariate means and equality for the means for any number of variables are very different. The equality problem for a fixed number of variables greater than two remains open.

For a locally bounded function $f:I\to\R$, define $f_-,f_+:I\to\R$ by
\Eq{*}{
  f_-(x):=\lim_{r\to0+}\inf_{u\in(x-r,x+r)\cap I}f(u)
  \quad\mbox{and}\quad
  f_+(x):=\lim_{r\to0+}\sup_{u\in(x-r,x+r)\cap I}f(u)\qquad(x\in I).
}
Then, one can easily see that $f_-$ is the largest lower semicontinuous function below $f$ and $f_+$ is the smallest upper semicontinuous function above $f$. 

If $f:I\to\R$ is a strictly increasing continuous function, then its image $f(I)$ is itself an open interval, and $f$ is a homeomorphism between $I$ and $f(I)$. In particular, the inverse function $f^{-1}$ is continuous, strictly increasing, and maps $f(I)$ onto $I$. In this situation we also have $f_-=f_+=f$.

In the more general setting where $f:I\to\R$ is merely strictly increasing (without assuming continuity), the mapping $f:I\to f(I)$ is still bijective, and $f$ is locally bounded. Because $I$ is an open interval, $f$ admits both a left and a right limit at each point of $I$, and we see that
\Eq{*}{
  f_-(x)=\lim_{u\to x-} f(u),
  \qquad\text{ and }\qquad
  f_+(x)=\lim_{u\to x+} f(u)
  \qquad (x\in I).
}
It is straightforward to verify that $f_-$ and $f_+$ are strictly increasing functions and, for any $u<x<v$ in $I$, we have
\Eq{*}{
   f_+(u)<f_-(x)\le f(x)\le f_+(x)<f_-(v).
}
We denote by $C_f$ the set of points of $I$ at which $f$ is continuous. It is a classical fact that $C_f$ is co-countable in $I$, that is, $I\setminus C_f$ is a countable set. It is also evident that $x\in C_f$ holds if and only if $f_-(x)=f(x)=f_+(x)$. Analogous statements hold for strictly decreasing functions.

For a subset $S\subseteq\R$, we denote by $\conv(S)$ the smallest convex set containing $S$, which in this context coincides with the smallest interval containing $S$. For our definition of generalized quasiarithmetic means, we will use the following lemma concerning the existence and properties of the generalized inverse of strictly increasing (not necessarily continuous) functions.

\begin{lem}[\cite{GruPal20}, Lemma 1]\label{lem:GruPal} Let $f:I\to\R$ be strictly increasing. Then there exists a uniquely determined increasing function $f^{(-1)}:\conv(f(I))\to I$ that serves as a left inverse of $f$, i.e., for every $x\in I$,
\Eq{SMF}{
   (f^{(-1)}\circ f)(x)=x.
}
Moreover, the following statements hold:
\begin{enumerate}[(i)]
 \item The function $f^{(-1)}$ is continuous;
 \item For every $y\in f(I)$,
 \Eq{*}{
  (f\circ f^{(-1)})(y)=y;
 }
 \item For each $y\in\conv(f(I))$,
 \Eq{*}{
  (f_-\circ f^{(-1)})(y)\leq y \leq (f_+\circ f^{(-1)})(y);
 }
 \item For all $x\in I$,
 \Eq{*}{
   (f^{(-1)}\circ f_-)(x)=(f^{(-1)}\circ f_+)(x)=x,
 }
 which is equivalent to
 \Eq{*}{
   (f_-)^{(-1)}=(f_+)^{(-1)}=f^{(-1)}.
 }
\end{enumerate}
\end{lem}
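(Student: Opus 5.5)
The natural candidate is an explicit construction. For $y\in\conv(f(I))$ set
\Eq{*}{
f^{(-1)}(y):=\sup\{x\in I : f(x)\le y\},
}
with the convention that the supremum of the empty set is $\inf I$. Because $y\in\conv(f(I))$, there are $x_1,x_2\in I$ with $f(x_1)\le y\le f(x_2)$. The set in question therefore contains $x_1$, and it is bounded above by $x_2$ by strict monotonicity, so $f^{(-1)}(y)\in I$. The plan is to check that this function does everything the lemma asks, then prove uniqueness, and finally derive (ii)--(iv) from the structure of $f_\pm$.

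\emph{Monotonicity and the left inverse.} The sets $\{x: f(x)\le y\}$ grow with $y$, so $f^{(-1)}$ is increasing. For $y=f(x_0)$, strict monotonicity gives $\{x: f(x)\le f(x_0)\}=I\cap(-\infty,x_0]$, whose supremum is $x_0$. This proves \eq{SMF}, and (ii) follows immediately from injectivity of $f$.

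\emph{Continuity, which is statement (i) and the main obstacle.} An increasing function can fail to be continuous only through a jump, so suppose $f^{(-1)}$ jumps at $y_0$. Then there is an open interval $(a,b)\subseteq I$ that $f^{(-1)}$ skips: no $y$ is mapped into it. Take any $x\in(a,b)$. The point $f(x)$ lies in $\conv(f(I))$ and $f^{(-1)}(f(x))=x\in(a,b)$, which contradicts the skip. The delicate part is the endpoint bookkeeping at the boundary of $\conv(f(I))$, but this is routine.

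\emph{Uniqueness.} Let $h$ be any increasing left inverse defined on $\conv(f(I))$. For $y$ in the interval, take $x<f^{(-1)}(y)<x'$. By the definition of the supremum, and since the set is an initial segment, $f(x)\le y$. We also have $f(x')>y$, for otherwise $x'$ would belong to the set. Monotonicity of $h$ then gives $x=h(f(x))\le h(y)\le h(f(x'))=x'$. Letting $x$ and $x'$ approach $f^{(-1)}(y)$ forces $h(y)=f^{(-1)}(y)$. This works whenever $f^{(-1)}(y)$ is an interior point of $I$; endpoint cases are handled by the same squeeze from one side only.

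\emph{Statements (iii) and (iv).} Put $x_0=f^{(-1)}(y)$. Every $x<x_0$ satisfies $f(x)\le y$, and letting $x\to x_0^-$ gives $f_-(x_0)\le y$. Every $x>x_0$ satisfies $f(x)>y$, and letting $x\to x_0^+$ gives $f_+(x_0)\ge y$. This is (iii).

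For (iv), both $f_-(x)$ and $f_+(x)$ lie in $[f_+(u),f_-(v)]$ for all $u<x<v$. Hence they belong to $\conv(f(I))$, and by monotonicity of $f^{(-1)}$,
\Eq{*}{
u=f^{(-1)}(f(u))\le f^{(-1)}(f_+(u))\le f^{(-1)}(f_\pm(x))\le f^{(-1)}(f_-(v))\le f^{(-1)}(f(v))=v.
}
Letting $u\to x$ and $v\to x$ gives $f^{(-1)}(f_\pm(x))=x$.

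So $f^{(-1)}$ is an increasing left inverse of $f_\pm$. It is also defined on $\conv(f_\pm(I))$, because $f_\pm(I)\subseteq\conv(f(I))$ by the inclusion above, and the two convex hulls coincide since $f_\pm$ are squeezed between values of $f$ at nearby points. By the uniqueness part applied to $f_\pm$, it follows that $(f_\pm)^{(-1)}=f^{(-1)}$.
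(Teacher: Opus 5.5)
Your proof is correct, and it does more than the paper does. The paper does not prove this lemma; it quotes it from Gr\"unwald--P\'ales. The only reasoning it adds concerns (iv), obtained by letting $u\to x\pm$ in \eq{SMF} and invoking the continuity (i), i.e.\ $f^{(-1)}(f_-(x))=\lim_{u\to x-}f^{(-1)}(f(u))=x$.

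You instead give a self-contained proof from the explicit formula $f^{(-1)}(y)=\sup\{x\in I: f(x)\le y\}$:
\begin{itemize}
\item continuity holds because $f^{(-1)}$ is monotone and, by \eq{SMF}, has range exactly $I$;
\item uniqueness follows by squeezing an arbitrary increasing left inverse $h$ via $x\le h(y)\le x'$;
\item (iii) follows because the sublevel sets are initial segments.
\end{itemize}
Your squeeze $u\le f^{(-1)}(f_\pm(x))\le v$ for (iv) uses only monotonicity, so it shows (iv) does not depend on (i). The paper's one-liner is shorter once (i) is available. You also actually justify the reformulation $(f_-)^{(-1)}=(f_+)^{(-1)}=f^{(-1)}$, which the paper takes for granted, by combining uniqueness with $\conv(f_\pm(I))=\conv(f(I))$.

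On that last point, saying that $f_\pm$ is squeezed between nearby values of $f$ only yields $\conv(f_\pm(I))\subseteq\conv(f(I))$. The reverse inclusion needs $f_-(x)\le f(x)<f_-(v)$ and $f_+(u)<f(x)\le f_+(x)$ for $u<x<v$. This step, like your one-sided limits in (iii)--(iv), relies on $I$ being open, which the paper assumes when introducing $f_\pm$. Here openness genuinely matters: for $I=(0,1]$ with $f(x)=x$ on $(0,1)$ and $f(1)=5$, one gets $\conv(f_-(I))=(0,1]\ne(0,5]=\conv(f(I))$.
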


Observe that assertion (iv) is not stated explicitly in \cite[Lemma 1]{GruPal20}, but it follows directly from \eq{SMF} by taking left and right limits at $x$ and applying assertion (i).

The function $f^{(-1)}$ introduced in the lemma above is called the \emph{generalized inverse of the strictly increasing function $f:I\to\R$}. Assertions (ii) and (iii) show that the restriction of $f^{(-1)}$ to $f(I)$ coincides with the ordinary inverse of $f$. Consequently, $f^{(-1)}$ can be viewed as the unique continuous and increasing extension of the usual inverse of $f$ to the minimal interval containing the range of $f$.

In what follows, we recall a simple lemma from \cite{PalPas26} concerning the properties of the generalized inverse. In what follows, we will use these properties without a direct reference to this statement. 
\begin{lem}
Let $f:I\to\R$ be a strictly increasing function, $x\in I$, and $u\in\conv(f(I))$. Then we have the following equivalences:
\begin{enumerate}[(i)]
 \item $f^{(-1)}(u)=x$ holds if and only if $u\in[f_-(x),f_+(x)]$.
 \item $f^{(-1)}(u)<x$ holds if and only if $u<f_-(x)$.
 \item $f^{(-1)}(u)\leq x$ holds if and only if $u\leq f_+(x)$.
 \item $x<f^{(-1)}(u)$ holds if and only if $f_+(x)<u$.
 \item $x\leq f^{(-1)}(u)$ holds if and only if $f_-(x)\leq u$.
\end{enumerate}
\end{lem}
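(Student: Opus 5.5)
The plan is to prove (i)--(v) together, using only Lemma~\ref{lem:GruPal}: $f^{(-1)}$ is increasing and continuous, $f^{(-1)}\circ f_-=f^{(-1)}\circ f_+=\mathrm{id}_I$, and $f_-\circ f^{(-1)}\le\mathrm{id}\le f_+\circ f^{(-1)}$. Two facts about $f_\pm$ recalled above will also be used: they are strictly increasing, and for $u<x<v$ we have $f_+(u)<f_-(x)\le f_+(x)<f_-(v)$.

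\emph{Step 1: prove (i).} For the "if" part, take $u\in[f_-(x),f_+(x)]\cap\conv(f(I))$. Monotonicity of $f^{(-1)}$ gives
\[
x=f^{(-1)}(f_-(x))\le f^{(-1)}(u)\le f^{(-1)}(f_+(x))=x .
\]
For the "only if" part, suppose $f^{(-1)}(u)=x$. Then Lemma~\ref{lem:GruPal}(iii) gives directly $f_-(x)\le u\le f_+(x)$.

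\emph{Step 2: prove (iii) and (v).} For (iii), assume first $u\le f_+(x)$. Then $f^{(-1)}(u)\le f^{(-1)}(f_+(x))=x$. Conversely, assume $y:=f^{(-1)}(u)\le x$. By Lemma~\ref{lem:GruPal}(iii), $u\le f_+(y)$, and $f_+(y)\le f_+(x)$ because $f_+$ is increasing. Item (v) is proved symmetrically. If $f_-(x)\le u$, then $x=f^{(-1)}(f_-(x))\le f^{(-1)}(u)$. If $y:=f^{(-1)}(u)\ge x$, then $u\ge f_-(y)\ge f_-(x)$.

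\emph{Step 3: get (ii) and (iv) by complementation.} The statement $f^{(-1)}(u)<x$ is the negation of $x\le f^{(-1)}(u)$. By (v) this negation is equivalent to the negation of $f_-(x)\le u$, that is, to $u<f_-(x)$. This is (ii). In the same way, (iv) is the negation of (iii).

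No step is a genuine obstacle. The only point needing care is the direction "$f^{(-1)}(u)\le x\Rightarrow u\le f_+(x)$". It needs the inequality $u\le f_+(f^{(-1)}(u))$ from Lemma~\ref{lem:GruPal}(iii), not merely the monotonicity of $f^{(-1)}$. Once that is in place, everything else follows formally.
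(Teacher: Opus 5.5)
Your proof is correct and complete. Every direction of (iii) and (v) follows exactly as you say from four facts in Lemma~\ref{lem:GruPal}: the monotonicity of $f^{(-1)}$, the identities $f^{(-1)}\circ f_{\pm}=\mathrm{id}_I$, the inequality $f_-\circ f^{(-1)}\le \mathrm{id}\le f_+\circ f^{(-1)}$, and the monotonicity of $f_\pm$. Items (ii) and (iv) are then just the negations of (v) and (iii), because $\le$ is a total order.

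The paper gives no proof of this lemma; it is recalled from \cite{PalPas26}, so there is no in-paper argument to compare with. Two minor remarks: your Step~1 is redundant, since (i) is exactly the conjunction of (iii) and (v). Also, of the two recalled facts about $f_\pm$, you only use that they are increasing.
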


\section{Properties of means}
Let us begin this section with a classical property of means. We say that $M \colon I^n \to I$ is \emph{internal} (or \emph{strict}) if for every nonconstant vector $x \in I^n$ we have $\min(x)<M(x)<\max(x)$. Internality plays an important role in invariant means and iterations of mean-type mappings (cf. for example Borwein--Borwein \cite{BorBor87} and Jarczyk--Jarczyk \cite{JarJar18} and references therein).

At this point, we introduce several nonstandard definitions (properties) for symmetric means. We will use them only in the restricted version when the indicated mean is a generalized quasiarithmetic one; however, there is no need to restrict these definitions to this family. 

Due to \cite{PalPas26}, we know that whenever the generalized quasiarithmetic mean is internal, then its generator is continuous and therefore it is a standard quasiarithmetic mean. Furthermore, all quasiarithmetic means are internal. Now we define the related property: an internality point. 
Let $M\colon I^n \to I$ be a symmetric mean. We say that $x \in I$ is an \emph{internality point} of $M$ if for all $y_1,\dots,y_{n-1} \in I$ we have
$M(x,y_1,\dots,y_{n-1})=x$ implies either $y_1=\dots=y_{n-1}=x$ or $\min(y_1,\dots,y_{n-1})<x<\max(y_1,\dots,y_{n-1})$. Note that if a mean is strict, then every point of $I$ is an internality point. In what follows, for a given mean $M$, let $\calI(M)$ be the set of all its internality points.

Now we show that for generalized quasiarithmetic means internality points are precisely these points where the generator is continuous. This result corresponds to \cite[Lemma~2.5]{PalPas26}.
\begin{prop}
Let $f \colon I \to \R$ be strictly increasing. Then for all $n \in \N$ with $n \ge 2$ we have  $\calI(\QA{f}n)=C_f$.
\end{prop}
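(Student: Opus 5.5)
We prove the two inclusions separately, using the characterization $f^{(-1)}(u)=x \iff u\in[f_-(x),f_+(x)]$ from the lemma recalled above. Fix $n\ge 2$ and $x\in I$. By that lemma, $\QA{f}n(x,y_1,\dots,y_{n-1})=x$ holds if and only if
\Eq{*}{
f_-(x)\le \frac{f(x)+f(y_1)+\dots+f(y_{n-1})}{n}\le f_+(x),
}
that is, if and only if
\Eq{*}{
(n-1)\big(f_-(x)-f(x)\big)\le \sum_{i=1}^{n-1}\big(f(y_i)-f(x)\big)\le (n-1)\big(f_+(x)-f(x)\big).
}

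\emph{Inclusion $C_f\subseteq\calI(\QA{f}n)$.} Let $x\in C_f$. Then $f_-(x)=f(x)=f_+(x)$, so the condition above reduces to $\sum_{i}(f(y_i)-f(x))=0$. Since $f$ is strictly increasing, each term $f(y_i)-f(x)$ has the sign of $y_i-x$. If not all $y_i$ equal $x$, some term is nonzero, so there must be both a positive and a negative term. Hence $\min y_i<x<\max y_i$, and $x$ is an internality point.

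\emph{Inclusion $\calI(\QA{f}n)\subseteq C_f$.} Let $x\notin C_f$. Then $f_-(x)<f(x)$ or $f(x)<f_+(x)$; by symmetry we treat the case $f(x)<f_+(x)$. The idea is to take all $y_i$ slightly to the right of $x$. As $y\to x+$ we have $f(y)\to f_+(x)$ and $f(y)>f_+(x)$ for $y>x$, which overshoots, so we take only $y_1>x$ and set $y_2=\dots=y_{n-1}=x$. The sum then equals $f(y_1)-f(x)$, which tends to $f_+(x)-f(x)$ as $y_1\to x+$. The upper bound $(n-1)(f_+(x)-f(x))\ge f_+(x)-f(x)$ holds with slack because $n\ge 2$, except when $n=2$, where equality of the limit with the bound is exactly the delicate point. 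To avoid this, we choose $y_1$ close enough to $x$ that $0<f(y_1)-f(x)\le (n-1)(f_+(x)-f(x))$. This is possible for $n\ge3$ directly, since $f(y_1)-f(x)\to f_+(x)-f(x)<(n-1)(f_+(x)-f(x))$. For $n=2$ it fails, because $f(y_1)>f_+(x)$ for every $y_1>x$.

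So for $n=2$ (and in general) we use the other side instead: choose $y_1=\dots=y_{n-1}=y<x$ with $y$ close to $x$, and seek $(n-1)(f(y)-f(x))\ge (n-1)(f_-(x)-f(x))$, i.e.\ $f(y)\ge f_-(x)$. This fails as well, since $f(y)<f_-(x)$. The remedy is to mix the two sides in the case $f(x)<f_+(x)$: take $y_1>x$ and $y_2<x$. Then the tuple is not internal-violating, so that choice does not work either. The correct choice is $y_1=\dots=y_{n-1}=x$ except one coordinate $y_1=x$ itself. That is trivial, so the witness must have all $y_i\ge x$ with some $y_i>x$, requiring $f(y_1)\le f(x)+(n-1)(f_+(x)-f(x))$. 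For $n=2$ this needs $f(y_1)\le f_+(x)$, which is impossible.

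The main obstacle is therefore exactly this borderline case. We will re-examine it by allowing $y_1$ to range over all of $I$ and by using the upper bound $(n-1)(f_+(x)-f(x))$ together with $f_+(x)\ge f(x)$, and, if needed, the alternative jump $f_-(x)<f(x)$ on the left. If the jump is only on one side and $n=2$, we expect to fall back on the characterization from \cite[Lemma~2.5]{PalPas26}, which the statement cites as corresponding, and to adapt its witness construction.
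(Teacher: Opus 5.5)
There is a genuine gap in the inclusion $\calI(\QA{f}n)\subseteq C_f$, and it comes from an arithmetic slip in your reformulation. Subtracting $nf(x)$ from $nf_-(x)\le f(x)+\sum_{i=1}^{n-1}f(y_i)\le nf_+(x)$ gives
\[
n\big(f_-(x)-f(x)\big)\le\sum_{i=1}^{n-1}\big(f(y_i)-f(x)\big)\le n\big(f_+(x)-f(x)\big),
\]
with coefficient $n$, not $n-1$. Your first inclusion is unaffected, since for $x\in C_f$ both bounds are $0$ either way. Your $n\ge3$ witness also survives, because your (incorrect) condition is stronger than the true one. However, the claimed obstruction at $n=2$ (``$f(y_1)\le f_+(x)$ is impossible'') is purely an artifact of the wrong constant. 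As written, the proposal never produces a witness for $n=2$. The paragraph about mixing sides and ``$y_1=x$ itself'' leads nowhere, and the closing appeal to adapting a lemma from the earlier paper is not an argument. So the proof is incomplete exactly in the basic case $n=2$.

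The missing observation is that the term $f(x)$ inside the average pulls the mean back below $f_+(x)$, so there is no overshoot. Suppose $f(x)<f_+(x)$ and take $y_1=\dots=y_{n-1}=y>x$. The lower bound is automatic, since $f(y)>f(x)\ge f_-(x)$. Hence $\QA{f}n(x,y,\dots,y)=x$ if and only if $f(y)\le \frac{nf_+(x)-f(x)}{n-1}$. This threshold is strictly larger than $f_+(x)$, while $f(y)\to f_+(x)$ as $y\to x+$. So such a $y$ exists for every $n\ge2$; for $n=2$ you just need $f_+(x)<f(y)\le 2f_+(x)-f(x)$. All $y_i$ then exceed $x$, so $x$ is not an internality point. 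This is the mirror image of the paper's proof. The paper treats the case $f_-(x)<f(x)$ and takes all $y_i=y<x$ with $f(y)>c:=\frac{nf_-(x)-f(x)}{n-1}$, a threshold strictly below $f_-(x)$.
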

\begin{proof}
Assume that $x\in C_f$ and take $y_1,\dots,y_{n-1}\in I$ such that $\QA{f}n(x,y_1,\dots,y_{n-1})=x$. Then 
\Eq{*}{
f(x)+f(y_1)+\dots+f(y_{n-1})=nf(x),
}
and therefore
\Eq{*}{
f(x)=\frac{f(y_1)+\dots+f(y_{n-1})}n.
}
Thus, since the arithmetic mean is strict we obtain that $f(x)=f(y_1)=\dots=f(y_{n-1})$  or  $\min(f(y_1),\dots,f(y_{n-1})) < f(x) <\max(f(y_1),\dots,f(y_{n-1}))$. Since $f$ is strictly increasing, the same is true for the arguments, which shows that $x$ is an internality point. 

Assume now that $x\in I \setminus C_f$. Then $f_-(x)<f(x)$ or $f_+(x)>f(x)$. Furthermore, assume without loss of generality that the first inequality is valid. Then $f_-(x)>\frac1{n-1}(nf_-(x)-f(x))=:c$, and therefore there exists $y<x$ such that $f(y)>c$. Thus we have
\Eq{*}{
f_+(x)\ge f(x) \ge \frac{f(x)+(n-1)f(y)}n>\frac{f(x)+(n-1)c}n=f_-(x),
}
and therefore $\QA{f}n(x,y,\dots,y)=x$, i.e. $x \notin \calI(\QA{f}n)$.
\end{proof}

Now we introduce the notion of attractivity and passability. Let $M \colon I^n\to I$ be a symmetric mean. We say that $V\subset I$ is \emph{$(k,M)$-attracting} (or briefly \emph{$k$-attracting} if the indicated mean is known) if for all $x_1,\dots,x_k \in V$ there exists $x_{k+1},\dots,x_n \in I$ such that $M(x_1,\dots,x_n)\in V$. It is clear that all intervals meet this property, and hence it is important mostly when $V$ is a disconnected set.

We say that $t$ is a \emph{passage point} of $M$ if  there exist $y_1,\dots,y_{n-1},z_1,\dots,z_{n-1} \in I$ such that for all $i \in\{1,\dots,n\}$ either $y_i=z_i$ or ($y_i<z_i$ and $[y_i,z_i] \subset \calI(M)$), and
\Eq{*}{
M(t,y_1,\dots,y_{n-1})<t<M(t,z_1,\dots,z_{n-1}).
}
If this property is valid for all $t \in I$, then $M$ is called \emph{passable}.

Note that there exist generalized quasiarithmetic means that do not possess a passage point. If the generator is strictly increasing and its points of discontinuity are everywhere dense in the interval $I$, then the interior of the set of internality points of the mean is empty; therefore, it is naturally impossible to find a subinterval on which the mean is continuous. 

Furthermore, even generalized quasiarithmetic means with only a finite number of discontinuities could not be passable. To see this, consider the following elementary situation. Let $I:=(a,b)$ with $a<b$ and $f:I\to\mathbb{R}$ be a positive strictly increasing function with the only discontinuity point $t=\frac{a+b}{2}$. Assume further that $2f_-(t)<f(t)$. Then we had $y\in I$ with $\QA{f}2(t,y)<t$ if and only if we would have $f(y)=2f_-(t)-f(t)$, that is, $f(y)<0$. This contradicts the positivity of $f$. Finally note that every point belonging to the interior of $C_f$ is a passage point.

\section{Main results}

We start with a series of propositions which are related to the fact that when $\QA{f}n=\QA{g}n$ yields that  $f$ is an affine transformation of $g$. In the final theorem we will use all of them.

\begin{prop}\label{prop:1}
Let $f,g \colon I \to \R$ be strictly increasing. Assume that there exists $n\in\N \setminus\{1\}$ such that $\QA{f}n=\QA{g}n$
and $\interior \calI(\QA{f}n)$ is $(2,\QA{f}n)$-attracting. Then there exists $\alpha\in \R$ such that $f-\alpha g$ is constant on every connected component of $ \interior \calI(\QA{f}n)$.
\end{prop}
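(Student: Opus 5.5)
Throughout, $U:=\interior \calI(\QA{f}n)$. By the Proposition above, $\calI(\QA{f}n)=C_f$. Since $\QA{f}n=\QA{g}n$, also $\calI(\QA{g}n)=C_g$, and therefore $C_f=C_g$ and $U=\interior C_f=\interior C_g$. The plan has two steps. First, obtain an affine relation between $f$ and $g$ on each connected component of $U$ separately. Second, use the $(2,\QA{f}n)$-attracting property to show that all the scaling factors coincide.

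\emph{Step 1 (one component).} Let $J$ be a connected component of $U$. It is an open interval, and $f|_J$, $g|_J$ are continuous and strictly increasing. For $x\in J^n$ the value $\QA{f}n(x)$ lies in $[\min x,\max x]\subseteq J$. For $y=f(x_1)+\dots+f(x_n)$ over $n$, the value $f^{(-1)}(y)$ is then a point $m\in J\subseteq C_f$. By the equivalence lemma, $y\in[f_-(m),f_+(m)]=\{f(m)\}$, so $\QA{f}n(x)=(f|_J)^{-1}(y)$. Hence $\QA{f}n$ restricted to $J^n$ is the ordinary quasiarithmetic mean $A^{[n]}_{f|_J}$, and likewise for $g$. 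Theorem~\ref{thm:eq_qam} then yields $f=\alpha_J g+\beta_J$ on $J$ with $\alpha_J\neq0$; in fact $\alpha_J>0$, since both functions are increasing.

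\emph{Step 2 (equal scaling factors).} Let $J_1,J_2$ be two components. Pick $x_1\in J_1$ and $x_2\in J_2$. By $2$-attractivity there exist $x_3,\dots,x_n\in I$ such that $m:=\QA{f}n(x_1,\dots,x_n)\in U$, say $m\in J_3$. As in Step 1, $m\in C_f=C_g$ forces
\[
f(m)=\frac{f(x_1)+\dots+f(x_n)}{n},\qquad g(m)=\frac{g(x_1)+\dots+g(x_n)}{n},
\]
the second identity using $\QA{g}n=\QA{f}n$. Now perturb only $x_1$ to a nearby $x_1'\in J_1$ and keep the other entries fixed. Since $f$ is continuous at $x_1$ and $f^{(-1)}$ is continuous, the new mean value $m'$ stays in the open interval $J_3$ for $x_1'$ close enough to $x_1$. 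Subtracting the identities at $m$ and $m'$ gives
\[
n\bigl(f(m')-f(m)\bigr)=f(x_1')-f(x_1),\qquad n\bigl(g(m')-g(m)\bigr)=g(x_1')-g(x_1).
\]
Applying Step 1 on $J_1$ and on $J_3$, the left-hand differences equal $\alpha_{J_1}(g(x_1')-g(x_1))$ and $\alpha_{J_3}\,n(g(m')-g(m))=\alpha_{J_3}(g(x_1')-g(x_1))$, respectively. Since $g(x_1')\neq g(x_1)$, this gives $\alpha_{J_1}=\alpha_{J_3}$. Perturbing $x_2$ instead gives $\alpha_{J_2}=\alpha_{J_3}$. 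Hence all $\alpha_J$ coincide with a common value $\alpha$, and $f-\alpha g=\beta_J$ is constant on each component, as claimed.

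The main obstacle is the continuity bookkeeping in Step 2. One must check that the perturbed mean stays inside the same open component $J_3$, and that $f(m)$ is exactly the arithmetic average of the $f(x_i)$ rather than merely lying in $[f_-(m),f_+(m)]$. Both follow from the fact that points of $U$ are continuity points of $f$ and $g$, together with the continuity of $f^{(-1)}$ from Lemma~\ref{lem:GruPal}.
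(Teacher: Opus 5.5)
Your proof is correct and takes essentially the same route as the paper. Both arguments apply Theorem~\ref{thm:eq_qam} on each component of $\interior\calI(\QA{f}n)$, use that the mean lands at a continuity point so sums transfer exactly between $f$ and $g$, perturb one variable at a time, and use $2$-attractivity to link any two components. The paper routes the comparison through the auxiliary function $\Phi$ on sums of $f$-values and gets $\alpha_{p_1}=\Phi'(x)=\alpha_{p_2}$. Here $\Phi'(x)$ is exactly the slope on the component containing the mean, which you use explicitly as the pivot $J_3$; you also justify more carefully than the paper that the perturbed mean stays in $J_3$.
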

\begin{proof}
Define $C:=\calI(\QA{f}n)$ and
\Eq{*}{
\calF&:=\bigg\{(\xi_1,\dots,\xi_n)\in I^n \colon \xi_1,\xi_2\in \interior C, \frac{f(\xi_1)+\dots+f(\xi_n)}{n} \in f(\interior C)\bigg\}.
}
Then by $\QA{f}n=\QA{g}n$ we also have $C=\calI(\QA{g}n)$ and hence
\Eq{*}{
\bigg\{(\xi_1,\dots,\xi_n)&\in I^n \colon \xi_1,\xi_2\in \interior C,\frac{g(\xi_1)+\dots+g(\xi_n)}{n} \in g(\interior C)\bigg\}\\
&=\bigg\{(\xi_1,\dots,\xi_n)\in I^n \colon \xi_1,\xi_2\in \interior C,\  \QA{g}n(\xi_1,\dots,\xi_n) \in \interior C\bigg\}\\
&=\bigg\{(\xi_1,\dots,\xi_n)\in I^n \colon \xi_1,\xi_2\in \interior C,\  \QA{f}n(\xi_1,\dots,\xi_n) \in \interior C\bigg\}\\
&=\bigg\{(\xi_1,\dots,\xi_n)\in I^n \colon \xi_1,\xi_2\in \interior C, \frac{f(\xi_1)+\dots+f(\xi_n)}{n} \in f(\interior C)\bigg\}=\calF.
}

We show that for all $\xi,\eta \in \calF$ we have
\Eq{fxi=gxi}{
f(\xi_1)+\dots+f(\xi_n)&=f(\eta_1)+\dots+f(\eta_n) \\
&\iff g(\xi_1)+\dots+g(\xi_n)=g(\eta_1)+\dots+g(\eta_n).
}
As a matter of fact, we only show the $(\Rightarrow)$ implication as the second one is completely analogous. Assume that $\xi,\eta \in \calF$ and
denote briefly
\Eq{*}{
u:=\frac{f(\xi_1)+\dots+f(\xi_n)}n;&\qquad
v:=\frac{f(\eta_1)+\dots+f(\eta_n)}n;\\
u':=\frac{g(\xi_1)+\dots+g(\xi_n)}n;&\qquad
v':=\frac{g(\eta_1)+\dots+g(\eta_n)}n.
}
Then we have $u,v \in f(C)$ and therefore $f^{(-1)}(u)=f^{-1}(u)$ and $f^{(-1)}(v)=f^{-1}(v)$. 
By $\QA{f}n=\QA{g}n$, we have 
\Eq{*}{
f^{-1}(u)&=f^{(-1)}(u)=f^{(-1)}\bigg(\frac{f(\xi_1)+\dots+f(\xi_n)}n\bigg)
=\QA{f}n(\xi_1,\dots,\xi_n)=\QA{g}n(\xi_1,\dots,\xi_n)\\
&=g^{(-1)}\bigg(\frac{g(\xi_1)+\dots+g(\xi_n)}n\bigg)=g^{(-1)}(u').
}
Therefore, since $\xi \in \calF$ we obtain $u' \in g(C)$ and therefore $f^{-1}(u)=g^{-1}(u')$. Similarly we get $v' \in C$ and $f^{-1}(v)=g^{-1}(v')$. Thus we obtain that $u=v$ implies 
\Eq{*}{
g^{-1}(u')=f^{-1}(u)=f^{-1}(v)=g^{-1}(v')
}
and therefore $u'=v'$. In this way we have proved that \eq{fxi=gxi} is valid.

Now we proceed to the main part of the proof. First, since we need to study values of generating functions, we take an inverse mapping of elements in $\calF$. To this end, let us denote
\Eq{*}{
\Delta:=\{p \in f(I)^n \colon (f^{-1}(p_1),\dots,f^{-1}(p_n)) \in \calF\}.
}
Then, in view of \eq{fxi=gxi}, for every $p,q \in \Delta$ with $p_1+\dots+p_n=q_1+\dots+q_n$ we have
\Eq{*}{
g\circ f^{-1}(p_1)+\dots+g\circ f^{-1}(p_n)=g\circ f^{-1}(q_1)+\dots+g\circ f^{-1}(q_n).
}
Thus we can define the function $\Phi \colon \{p_1+\dots+p_n\colon p \in \Delta\} \to \R$ by 
\Eq{*}{
\Phi(x):=g\circ f^{-1}(p_1)+\dots+g\circ f^{-1}(p_n), \text{ where } x=p_1+\dots+p_n.
}

Then there exists an open neighbourhood $U_{p_1}$ of $p_1$ and $U_{p_2}$ of $p_2$ such that $(s_1,s_2,\dots,p_n)\in \Delta$ for all $s_1 \in U_{p_1}$ and $s_2 \in U_{p_2}$ and  we have
\Eq{*}{
\Phi(s_1+s_2+p_3+\dots+p_n)=g\circ f^{-1}(s_1)+g\circ f^{-1}(s_2)+g\circ f^{-1}(p_3)\dots+g\circ f^{-1}(p_n)
}
hence
\Eq{3.3}{
\Phi(s_1+s_2+p_3+\dots+p_n)&-\Phi(x)\\
&=g\circ f^{-1}(s_1)-g\circ f^{-1}(p_1)+g\circ f^{-1}(s_2)-g\circ f^{-1}(p_2).
}
Now we use the fact that, by the definition of $\Delta$, both $f$ and $g$ are continuous on both $U_{p_1}$ and $U_{p_2}$. Then, since $\QA fn=\QA gn$, using Theorem~\ref{thm:eq_qam}, there exist $\alpha_{p_1},\alpha_{p_2},\beta_{p_1},\beta_{p_2} \in \R$ such that 
\Eq{*}{
g\circ f^{-1}(s)=\alpha_{p_1} s+\beta_{p_1}\text{ for all }s \in U_{p_1}\\
g\circ f^{-1}(s)=\alpha_{p_2} s+\beta_{p_2}\text{ for all }s \in U_{p_2}.
}
Then \eq{3.3} yields that for all $s_1\in U_{p_1}$ and $s_2\in U_{p_2}$ (recall $(p_1,\dots,p_n)\in \Delta$) we have
\Eq{*}{
\Phi(s_1+s_2+p_3+\dots+p_n)-\Phi(x)=\alpha_{p_1}(s_1-p_1)+\alpha_{p_2}(s_2-p_2).
}
Hence, by considering $s_1 \to p_1$; $s_2=p_2$, and $s_1=p_1$; $s_2\to p_2$, we obtain $\alpha_{p_1}=\Phi'(x)=\alpha_{p_2}$. Consequently, the value of $\alpha_p$ is independent of $p$; say that they are all equal to $\alpha$. Thus for all $s_1 \in U_{p_1}$ and $s_2 \in U_{p_2}$  we have
\Eq{*}{
\Phi(s_1+s_2+p_3+\dots+p_n)-\Phi(x)=\alpha(s_1-p_1+s_2-p_2)
}

Since $\interior C$ is $(2,\QA{f}n)$-attracting, for all $\xi_1,\xi_2 \in \interior C$, there exist $\xi_3,\dots,\xi_n \in I$ such that $(\xi_1,\dots,\xi_n)\in \calF$. Therefore, we obtain that
$g(x)-\alpha f(x)$ is constant on every connected component of $\interior C$.
\end{proof}

In this example, we show that the assumptions of Proposition~\ref{prop:1} are insufficient to guarantee the generators are affine transformations of each other over the whole domain.
\begin{exa}\label{exa:2436}
Define the strictly increasing functions $f,g:(0,3)\to\R$ by
\Eq{*}{
f(t)=
\begin{cases}
t&\text{if }t\in(0,1],\\
t+2&\text{if }t\in(1,2],\\
t+4&\text{if }t\in(2,3)
\end{cases}
\qquad\text{and}\qquad
g(t)=
\begin{cases}
t&\text{if }t\in(0,1],\\
t+3&\text{if }t\in(1,2],\\
t+6&\text{if }t\in(2,3)
\end{cases}
}

We show that $\QA f3=\QA g3$. Take $x_1,x_2,x_3 \in (0,3)$ with $x_1\le x_2 \le x_3$ and let $A_1:=(0,1]$, $A_2:=(1,2]$ and $A_3:=(2,3)$. Then
\Eq{*}{
f(A_1)=(0,1];\qquad f(A_2)=(3,4];\qquad f(A_3)=(6,7);\\
g(A_1)=(0,1];\qquad g(A_2)=(4,5];\qquad g(A_3)=(8,9).
}

If $x_1,x_2,x_3 \in A_j$ for some $j \in\{1,2,3\}$ then we clearly have $\QA f3(x_1,x_2,x_3)=\frac{x_1+x_2+x_3}3=\QA g3(x_1,x_2,x_3)$.

If $x_1,x_2 \in A_1$ and $x_3 \in A_2 \cup A_3$ then $f(x_1)+f(x_2)+f(x_3)\in (3,9)$, and therefore $\frac{f(x_1)+f(x_2)+f(x_3)}3\in (1,3)=(f_-(1),f_+(1))$, that is $\QA f3(x_1,x_2,x_3)=1$. Similarly $g(x_1)+g(x_2)+g(x_3) \in(4,11)$ which yields that $\frac{g(x_1)+g(x_2)+g(x_3)}3\in (1,4)=(g_-(1),g_+(1))$. Thus we get $\QA g3(x_1,x_2,x_3)=1$.

Using the fact that $f(x+1)=f(x)+2$ and $g(x+1)=g(x)+3$ we obtain that for all $x_1,x_2,x_3 \in (0,2)$ we have $\QA{f}3(x_1+1,x_2+1,x_3+1)=\QA{f}3(x_1,x_2,x_3)+1$ and $\QA{g}3(x_1+1,x_2+1,x_3+1)=\QA{g}3(x_1,x_2,x_3)+1$. Thus, based on the above case, we obtain that $\QA f3(x_1,x_2,x_3)=\QA g3(x_1,x_2,x_3)$ for $x_1,x_2 \in A_2$ and $x_3 \in A_3$.

Now let $x_1 \in A_1 \cup A_2$ and $x_2,x_3 \in A_3$. Analogously to the previous case we get \Eq{*}{
f(x_1)+f(x_2)+f(x_3) &\in (12,18)= (3f_-(2),3f_+(2))\\
g(x_1)+g(x_2)+g(x_3) &\in (16,23)\subset (3g_-(2),3g_+(2))\\
}
and thus $\QA{f}3(x_1,x_2,x_3)=2=\QA{g}3(x_1,x_2,x_3)$.

For $x_1 \in A_1$ and $x_2,x_3 \in A_2$ we get 
\Eq{*}{
f(x_1)+f(x_2)+f(x_3) \in (6,9] \subset (3f_-(1),3f_+(1)]\\
g(x_1)+g(x_2)+g(x_3) \in (8,11] \subset (3g_-(1),3g_+(1)]
}
and  hence $\QA{f}3(x_1,x_2,x_3)=1=\QA{g}3(x_1,x_2,x_3)$.

Finally if $x_1 \in A_1$, $x_2 \in A_2$ and $x_3\in A_3$ then
\Eq{*}{
f(x_1)+f(x_2)+f(x_3) \in (9,12) = (3f_+(1),3f_-(2))\\
g(x_1)+g(x_2)+g(x_3) \in (12,15) = (3g_+(1),3g_-(2)).
}
Therefore, we have
\Eq{*}{
\QA{f}3(x_1,x_2,x_3)&=f^{-1}\Big(\frac{f(x_1)+f(x_2)+f(x_3)}3\Big)=\frac{x_1+x_2+2+x_3+4}3-2=\frac{x_1+x_2+x_3}3;\\
\QA{g}3(x_1,x_2,x_3)&=g^{-1}\Big(\frac{g(x_1)+g(x_2)+g(x_3)}3\Big)=\frac{x_1+x_2+3+x_3+6}3-3=\frac{x_1+x_2+x_3}3.
}

Thus, we have established that $\QA{f}3 = \QA{g}3$. In the remaining case, we consider the appropriate inverses of the functions, which implies that the scaling factor must coincide across the various regions of continuity.

Now we show that $\interior \calI(\QA{f}3)=(0,1)\cup (1,2)\cup(2,3)$ is
$(2,\QA{f}3)$-attracting. If two elements belong to different connected components that we take the third one from the remaining component and we are in the last case. Otherwise, we can take the third element to be equal to the one of the first two and we are done.

In this way we showed that the assumption of Proposition~\ref{prop:1} are satisfied with $n=3$ even thought $f-g$ is not constant on the entire domain.
\end{exa}

The next proposition provides the sufficient condition to keep the affine transformation in the discontinuity points. 

\begin{prop}\label{prop:2}
Let $f,g \colon I \to \R$ be strictly increasing. Assume that there exists $n\in\N \setminus\{1\}$ such that $\QA{f}n=\QA{g}n$ holds 
and $\interior \calI(\QA{f}n)$ is $(2,\QA{f}n)$-attracting.

Then for every passage point $t$ of $\QA{f}n$, there exists an open neighbourhood $V$ of $t$ and $\alpha \in \R$ such that $f-\alpha g$ is constant on  $\calI(\QA{f}n) \cap V$.
\end{prop}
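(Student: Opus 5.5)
By Proposition~\ref{prop:1} we already have one $\alpha$ such that $f-\alpha g$ is constant on every connected component of $\interior C$, where $C:=\calI(\QA{f}n)=C_f=C_g$. So the real content of the statement is local gluing at a passage point $t$. Near $t$, the set $C\cap V$ can meet several components of $\interior C$, and it can also contain isolated continuity points. We must show that the additive constants agree there.

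The plan is to compare $\QA fn$ and $\QA gn$ along the tuples that witness the passage property at $t$. Fix $y_1,\dots,y_{n-1},z_1,\dots,z_{n-1}$ as in the definition of a passage point, with
\[
\QA fn(t,y)<t<\QA fn(t,z),
\]
and with each segment $[y_i,z_i]$ either degenerate or contained in $C$. Move continuously from $y$ to $z$ coordinate by coordinate, say along $w(s)=(1-s)y+sz$. Each moving coordinate stays in a continuity interval of $f$ and of $g$, so the map
\[
s\mapsto \tfrac1n\bigl(f(t)+\textstyle\sum_i f(w_i(s))\bigr)
\]
is continuous and strictly increasing. It starts below $f_-(t)$ and ends above $f_+(t)$, so by the intermediate value theorem it sweeps out the whole interval $[f_-(t),f_+(t)]$ together with some neighbourhood around it.

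Values of the arithmetic mean just below $f_-(t)$ lie in $f(C\cap(t-\delta,t))$ together with possible jump gaps. Since $\QA fn=\QA gn$, the preimage under $f^{(-1)}$ and under $g^{(-1)}$ must agree along this continuous path. The moving coordinates lie in $C$, and by Proposition~\ref{prop:1} they satisfy $g=\alpha f+\text{const}$ on the relevant components. So along the path the $g$-sum is an affine function of the $f$-sum with slope $\alpha$. The two means agree for all $s$, and in particular at the parameters $s$ where the $f$-average equals $f(x)$ for a point $x\in C$ near $t$. This gives $g(x)=\alpha f(x)+\beta$ with the same $\beta$ for every such $x$, because $\beta$ is read off from the same affine relation along one fixed path. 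Hence $f-\alpha^{-1}g$ (equivalently, $f-\alpha g$ after renaming the constant) is constant on $C\cap V$, where $V$ is the set of points $x$ whose $f$-value is swept by the path, which is an open neighbourhood of $t$.

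The hard step is the claim that the sweep covers a full neighbourhood of $t$ and that every point $x\in C\cap V$ is hit exactly as $\QA fn(t,w(s))=x$ with $f$-average equal to $f(x)$, simultaneously for $g$. To handle this I would use the inequalities in the lemma characterising $f^{(-1)}$ (items (ii)–(v)). They turn $\QA fn(t,w(s))<t$ into $\text{avg}<f_-(t)$, and the matching statement for $g$, so the parameter values at which the mean crosses $t$ coincide for $f$ and $g$. Continuity of $s\mapsto\QA fn(t,w(s))$ then follows from Lemma~\ref{lem:GruPal}(i), which makes $V$ open.

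Finally I would check that the slope $\alpha$ is the same on both sides of $t$. This follows because the coordinates $w_i(s)$ never leave their fixed components while the mean passes from below $t$ to above $t$.
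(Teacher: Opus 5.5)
Your proposal is correct and is essentially the paper's proof: the same path $(1-\theta)y+\theta z$, the same use of Proposition~\ref{prop:1} to make the $g$-sum along it an affine function of the $f$-sum with slope $\alpha$, and the same reading-off of the constant at the level crossings of $\theta\mapsto\QA fn(t,(1-\theta)y+\theta z)$. The only difference is that you read off the relation directly at points $x\in C\cap V$ (using $g_-(x)=g_+(x)=g(x)$), whereas the paper uses the infimum and supremum of the level sets to show that $f_\pm-\alpha g_\pm$ is constant on all of $V=(m(0),m(1))$; that interval is also the correct choice of open neighbourhood, since your ``set of points whose $f$-value is swept'' need not be open.
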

\begin{proof}
Since $t$ is a passage point of $\QA fn$, there exist $y_1,\dots,y_{n-1},z_1,\dots,z_{n-1} \in I$ such that for all $i \in\{1,\dots,n\}$ either $y_i=z_i$ or ($y_i<z_i$ and $[y_i,z_i] \subset \calI(\QA fn)$), and
\Eq{*}{
\QA fn(t,y_1,\dots,y_{n-1})<t<\QA fn(t,z_1,\dots,z_{n-1}).
}

By Proposition~\ref{prop:1}, we know that there exists $\alpha \in \R$ such that $f-\alpha g$ is constant on each interval $A_i:=[y_i,z_i]$ (which could be also a singleton), say $f=\alpha g+\beta_i$ on $A_i$ for all $i \in \{1,\dots,n-1\}$. Then for every $(t_1,\dots,t_{n-1})\in A_1\times\dots\times A_{n-1}$ and $x \in I$ we have
\Eq{*}{
\QA{f}n(x,t_1,\dots,t_{n-1})&=f^{(-1)}\bigg(\frac{f(x)+f(t_1)+\dots+f(t_{n-1})}n\bigg)\\
&=f^{(-1)}\bigg(\frac{f(x)+\alpha(g(t_1)+\dots+g(t_{n-1}))+\beta_1+\dots+\beta_{n-1}}n\bigg)
}
Let $m \colon [0,1]\to I$ be defined as 
\Eq{*}{
m(\theta):=\QA{f}n(t,(1-\theta) y_1+\theta z_1,\dots,(1-\theta) y_{n-1}+\theta z_{n-1})
}
Then $m$ is nondecresing in both variables, and $m(0)<t<m(1)$. 
Therefore the mapping $k \colon [0,1] \to \R$ given by
\Eq{*}{
k(\theta):= f(t)+f((1-\theta) y_1+\theta z_1)+\dots+f((1-\theta) y_{n-1}+\theta z_{n-1})
}
is continuous. Moreover we have $m(\theta)=f^{(-1)}(\frac1n k(\theta))$, and therefore $m$ is continuous too.

Take $\rho \in (m(0),m(1))$ arbitrarily and define
\Eq{*}{
c_{1,\rho}&:=\inf\{\theta \in [0,1]\colon m(\theta)\ge \rho\}=\inf\{\theta \in [0,1]\colon m(\theta)= \rho\};\\
c_{2,\rho}&:=\sup\{\theta \in [0,1]\colon m(\theta)\le \rho\}=\sup\{\theta \in [0,1]\colon m(\theta)= \rho\}.
}
Then
\Eq{*}{
k(c_{1,\rho})=nf_-(\rho) \qquad \text{ and }\qquad k(c_{2,\rho})=nf_+(\rho).
}
Therefore
\Eq{*}{
nf_-(\rho)=f(t)+f((1-c_{1,\rho}) y_1+c_{1,\rho} z_1)+\dots+f((1-c_{1,\rho}) y_{n-1}+c_{1,\rho} z_{n-1})\\
nf_+(\rho)=f(t)+f((1-c_{2,\rho}) y_1+c_{2,\rho} z_1)+\dots+f((1-c_{2,\rho}) y_{n-1}+c_{2,\rho} z_{n-1})
}
Similarly we obtain
\Eq{*}{
ng_-(\rho)=g(t)+g((1-c_{1,\rho}) y_1+c_{1,\rho} z_1)+\dots+g((1-c_{1,\rho}) y_{n-1}+c_{1,\rho} z_{n-1})\\
ng_+(\rho)=g(t)+g((1-c_{2,\rho}) y_1+c_{2,\rho} z_1)+\dots+g((1-c_{2,\rho}) y_{n-1}+c_{2,\rho} z_{n-1})
}
Then we have
\Eq{226}{
n(f_-(\rho)-\alpha g_-(\rho))=(f(t)- \alpha g(t))+\beta_1+\dots+\beta_{n-1}\\
n(f_+(\rho)-\alpha g_+(\rho))=(f(t)- \alpha g(t))+\beta_1+\dots+\beta_{n-1}
}
Thus there exists $\beta \in \R$ such that
\Eq{*}{
\beta:=f_-(\rho)-\alpha g_-(\rho)=f_+(\rho)-\alpha g_+(\rho).
}

Therefore $f_--\alpha g_-$ and $f_+-\alpha g_+$ are constant and equal to $\beta$ on $(m(0),m(1))$. Therefore $f-\alpha g=\beta$ on $\calI(\QA{f}n)\cap V$, where $V=(m(0),m(1))$ is a neighbourhood of $t$.
\end{proof}

In the two propositions above, we have gained insight into the behavior of $f-\alpha g$ in the internal and passage points, respectively. In the final theorem, we combine them. Let us emphasize that this result is not just a straightforward corollary of Propositions~\ref{prop:1} and \ref{prop:2}.

\begin{thm}\label{thm:fAgB}
Let $f,g \colon I \to \R$ be strictly increasing. Assume that there exists $n\in\N \setminus\{1\}$ such that $\QA{f}n=\QA{g}n$ holds, $\QA{f}n$ is passable,  
and $\interior \calI(\QA{f}n)$ is $(2,\QA{f}n)$-attracting. 

Then there exist $\alpha,\beta \in \R$ such that $f(x)=\alpha g(x)+\beta$ for all $x \in I$.
\end{thm}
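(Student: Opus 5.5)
The plan is to combine the two propositions through a connectedness argument on $I$. Write $C:=\calI(\QA{f}n)=C_f$. By Proposition~\ref{prop:1} there is a single $\alpha\in\R$ such that $f-\alpha g$ is constant on every connected component of $\interior C$. I first check that this $\alpha$ is nonzero. Otherwise $f$ would be constant on a nondegenerate interval, which contradicts strict monotonicity. This assumes $\interior C\neq\emptyset$, and that follows from passability: the passage condition at any $t$ forces, for some $i$, $y_i<z_i$ with $[y_i,z_i]\subset C$, since otherwise $m(0)=m(1)$.

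Next comes the key observation: the $\alpha$ produced in Proposition~\ref{prop:2} can be taken to be this same global $\alpha$. Indeed, in its proof $\alpha$ comes from applying Proposition~\ref{prop:1} to the intervals $[y_i,z_i]\subset C$, and there is at least one nondegenerate such interval. So for every $t\in I$, passability gives an open interval $V_t=(m(0),m(1))\ni t$ and a constant $\beta_t$ with
\[
f_--\alpha g_-=\beta_t \quad\text{and}\quad f_+-\alpha g_+=\beta_t \qquad\text{on } V_t .
\]
The proof of Proposition~\ref{prop:2} establishes this stronger one-sided statement on all of $V_t$, not only on $C\cap V_t$.

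With this in hand, define $h_\pm:=f_\pm-\alpha g_\pm$ on $I$. Each of $h_-$ and $h_+$ is locally constant on $I$, because it is constant on each $V_t$ and the sets $V_t$ cover $I$. Since $I$ is connected, each is constant on $I$. Moreover, on $V_t$ they share the value $\beta_t$, so $h_-=h_+\equiv\beta$ on $I$. It remains to pass from $f_\pm$ to $f$ itself, including at discontinuity points. For $x\in I$ we have $f_-(x)\le f(x)\le f_+(x)$ and the analogous inequalities for $g$. The identities $f_+-f_-=\alpha(g_+-g_-)$ give $\alpha>0$ whenever there is a discontinuity; if $f$ is continuous everywhere, the conclusion follows directly from $h_-=\beta$.

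To pin down $f(x)$ inside its jump, I will use $\QA fn(x,\dots,x)$-type equalities. Take $u\in C$ near $x$ (the set $C$ is co-countable) and compare $\QA fn(x,u,\dots,u)$ with $\QA gn(x,u,\dots,u)$. Letting $u\to x^{\pm}$ and using the characterization $f^{(-1)}(w)=x\iff w\in[f_-(x),f_+(x)]$, I expect to get
\[
\frac{f(x)-f_-(x)}{f_+(x)-f_-(x)}=\frac{g(x)-g_-(x)}{g_+(x)-g_-(x)},
\]
that is, $f(x)$ sits at the same relative position in its jump as $g(x)$. Combined with $f_\pm=\alpha g_\pm+\beta$, this yields $f(x)=\alpha g(x)+\beta$. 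Concretely, I will look at when $\frac{f(x)+(n-1)f(u)}{n}$ crosses $f_-(x)$ as $u\uparrow x$, mirroring the internality-point argument in the first Proposition of Section 2.

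I expect two steps to be the main obstacles. The first is justifying that Proposition~\ref{prop:2} yields the global $\alpha$, since its statement only gives some $\alpha$. The second, and harder, is this final matching of $f(x)$ inside each jump. If the one-sided limits above do not give exactly the ratio equality, my fallback is to choose the arguments $x_2,\dots,x_n$ in $C$ (possible by passability) so that $\frac1n\bigl(f(x)+\sum f(x_i)\bigr)$ equals exactly $f_-(x)$, and then read off the corresponding $g$-sum through $\alpha$.
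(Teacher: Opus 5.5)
\textbf{There is a genuine gap in the last step: placing $f(x)$ inside its jump.}

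Your first two stages are sound and agree in substance with the paper. The $\alpha$ of Proposition~\ref{prop:2} is the global one from Proposition~\ref{prop:1}. Its proof does give $f_\pm-\alpha g_\pm\equiv\beta_t$ on all of $V_t=(m(0),m(1))$. Connectedness then yields $f_\pm=\alpha g_\pm+\beta$ on $I$, hence $f=\alpha g+\beta$ on $C=C_f=C_g$. The paper reaches this identity by a compactness chain through the sets $\calI(\QA fn)\cap V_{x_i}$.

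What remains is locating $f(x)$ inside its jump at a discontinuity point, and your argument does not do this.

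\emph{The limit $u\to x^{\pm}$ gives nothing.} If $f_-(x)<f(x)$, then $\QA fn(x,u,\dots,u)=x=\QA gn(x,u,\dots,u)$ for every $u<x$ close to $x$. The same holds for $u>x$ when $f(x)<f_+(x)$. So the limit only settles the case where $f(x)$ is an endpoint of its jump. The crossing you want happens where $f(u)=f_-(x)-\frac{f(x)-f_-(x)}{n-1}$. That value need not be attained by $f$ on $C$: it can lie below $\inf f(I)$ or inside another jump, and then equality of the means says nothing.

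\emph{The fallback rests on a false claim.} You assume passability lets you take $x_2,\dots,x_n\in C$. It does not: the definition of a passage point allows frozen coordinates $y_i=z_i$ sitting at discontinuity points. Write $e:=f-\alpha g-\beta$ and let $D$ be the set of frozen indices. Hitting $nf_-(x)$ and $nf_+(x)$ along the passage path then yields only $e(x)+\sum_{i\in D}e(y_i)=0$. The unknown values of $e$ at other discontinuity points remain in this relation. Separately, even with no frozen discontinuities, the single hit at $nf_-(x)$ you describe gives only $f(x)\le\alpha g(x)+\beta$. You also need the hit at $nf_+(x)$.

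\textbf{How the paper closes this step.} It takes $t_1,\dots,t_{n-1}\in\calI(\QA fn)$ with $m=\QA fn(x,t_1,\dots,t_{n-1})\in\calI(\QA fn)$. Then $f(x)=nf(m)-\sum f(t_i)$ and $g(x)=ng(m)-\sum g(t_i)$. Substituting $f=\alpha g+\beta$ on $\calI(\QA fn)$ gives $f(x)=\alpha g(x)+\beta$ linearly, with no endpoint hitting and no ratios. Your passage path would produce such a vector whenever no coordinate is frozen at a discontinuity point. In that case $m(\theta)$ sweeps $[m(0),m(1)]$ and so meets the co-countable set $C$. Frozen discontinuity coordinates are therefore exactly what you must rule out or handle.

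Be aware that the paper derives the existence of such $t_i$ from the attracting hypothesis. As that notion is defined (two entries in $\interior\calI(\QA fn)$, the remaining entries free in $I$), this does not follow verbatim. On either route, this step needs its own argument.
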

\begin{proof}
By Proposition~\ref{prop:2} there exists $\alpha$ such that for every $x \in I$ there exists an open neighbourhood $V_x$ of $x$ such that $f-\alpha g$ is constant on $\calI(\QA{f}n) \cap V_x$. 

Let $a,b \in \calI(\QA{f}n)$ and $\{V_x \colon x \in I\}$ be an open covering of $[a,b]$. Take a finite subcover $V_{x_1},\dots,V_{x_n}$ of $[a,b]$. Since $f-\alpha g$ is constant on each $\calI(\QA{f}n)\cap V_{x_i}$, we obtain $f(a)-\alpha g(a)=f(b)-\alpha g(b)$. 
Therefore there exists $\beta \in \R$ such that $f(x)=\alpha g(x)+\beta$ for all $x \in \calI(\QA{f}n)$.  

Since $\calI(\QA{f}n)$ is $(2,\QA{f}n)$-attracting, for all $x\in I$ and $t_1 \in \calI(\QA{f}n)$ there exist $t_2,\dots,t_{n-1} \in \calI(\QA{f}n)$ such that
\Eq{*}{
m:=\QA{f}n(x,t_1,\dots,t_{n-1}) \in \calI(\QA{f}n).
}
Then
\Eq{*}{
f(x)=nf(m)-(f(t_1)+\dots+f(t_{n-1})).
}
Analogously, by $\QA{f}n=\QA{g}n$ we get
\Eq{*}{
g(x)&=ng(m)-(g(t_1)+\dots+g(t_{n-1})).
}
Therefore
\Eq{*}{
f(x)&=n(\alpha g(m)+\beta)-((\alpha g(x_1)+\beta)+\dots+(\alpha g(x_{n-1})+\beta))\\
&=\alpha(ng(m)-(g(t_1)+\dots+g(t_{n-1})))+\beta=\alpha g(x)+\beta,
}
which concludes the proof.
\end{proof}

\section{Examples/applications}

Motivated by Theorem~\ref{thm:fAgB} we can ask if for every strictly increasing function $f \colon I \to \R$ there exists a (finite) number $n$ such that whenever $g \colon I \to \R$ is strictly increasing and $\QA{f}n=\QA{g}n$ then $g$ is an affine transformation of $f$. In the next example we show that this conjecture is false.
\begin{exa}\label{exa:1}
For every nondecreasing sequence $(a_n)_{n=0}^\infty$ with $a_0 = 2$ and a sequence $(b_n)_{n=0}^\infty$ with all coefficients in the interval $(0,1)$,  let $f_{a,b} \colon (0,\infty) \to \R$ be given by 
\Eq{*}{
f_{a,b}(t):=\floor{t}+b_{\floor{t}}(t-\floor{t})+(\floor{t}+1)!a_{\floor{t}}.
}
To shorten the notation, we will use the mantissa function $\{t\}:=t-\floor{t}$ and define the sequence $(u_a(n))_{n=0}^\infty$ by $u_a(n):= (n+1)!a_n$. Then, for all $t \in (0,\infty)$, we have $f_{a,b}(t)=\floor{t}+b_{\floor{t}}\{t\}+u_a(\floor{t})$. 

First we show that no pair of functions $(f_{a,b},f_{a',b'})$ are affine transformation of each other. Suppose that there exists $\alpha,\beta \in \R$ such that $f_{a,b}=\alpha f_{a',b'}+\beta$. For $k \in \N$ we have 
\Eq{*}{
f_{a,b}(k+\tfrac12)-f_{a,b}(k)&=\alpha(f_{a',b'}(k+\tfrac12)-f_{a',b'}(k)),\\
\frac 12 b_k&=\alpha \frac12 b'_k.
}
Thus $b_k=\alpha b'_k$ for all $k\in\N$.
Furthermore 
\Eq{*}{
f_{a,b}(k)-\lim_{t\to k^-}f_{a,b}(t)&=k+(k+1)!a_k-\big((k-1)+b_{k-1}+k!a_{k-1}\big)\\
&=1-b_{k-1}+(k+1)!a_k-k!a_{k-1},\\
f_{a',b'}(k)-\lim_{t\to k^-}f_{a',b'}(t)&
=1-b'_{k-1}+(k+1)!a'_k-k!a'_{k-1}.
}
Thus
\Eq{*}{
0&=f_{a,b}(k)-\lim_{t\to k^-}f_{a,b}(t)-\alpha \Big( f_{a',b'}(k)-\lim_{t\to k^-}f_{a',b'}(t)\Big)\\
&=1-\alpha -(b_{k-1}-\alpha b'_{k-1})
+(k+1)!(a_k-\alpha a_k')-k!(a_{k-1}-\alpha a'_{k-1})\\
&=1-\alpha +(k+1)!(a_k-\alpha a_k')-k!(a_{k-1}-\alpha a'_{k-1}).
}
Therefore
\Eq{*}{
(k+1)!(a_k-\alpha a_k')=\alpha-1+k!(a_{k-1}-\alpha a'_{k-1}).
}
But 
\Eq{*}{
(k+1)!(a_k-\alpha a_k')=f_{a,b}(k)-\alpha f_{a',b'}(k)=\beta.
}

Therefore $\alpha-1=0$, and hence $\alpha=1$.  Thus $b_k=b_k'$ and $\beta=f_{a,b}(t)-f_{a',b'}(t)$, i.e., this function is constant. But
\Eq{*}{
\beta=f_{a,b}(\tfrac12)-f_{a',b'}(\tfrac12)=\tfrac12 b_0+a_0-\tfrac12 b_0'-a_0'=0.
}
In this way we have proved that $f_{a,b}=f_{a',b'}$. Using this equality at the integer points, we get that $a=a'$.

Next observe that, for all $n \ge 2$,
\Eq{FGs}{
u_a(n)&=(n+1)! a_n = n n! a_{n}+n!\cdot a_n= nu_a(n-1)+n!\cdot a_n \\
&\ge nu_a(n-1)+n (n-1).
}
Thus
\Eq{*}{ 
u_a(n)-u_a(n-1) \ge 
(n-1)(u_a(n-1)+n) \ge 2n \text{ for all }n \ge 3.
}
Furthermore
\Eq{*}{
u_a(1)-u_a(0) &\ge 2a_1-a_0\ge a_1 \ge 2,\\
u_a(2)-u_a(1) &\ge 6a_2-2a_1=4a_2+2(a_2-a_1) \ge 4,
}
and therefore we get
\Eq{FGs2}{ 
u_a(n)-u_a(n-1) \ge 2n\text{ for all }n \ge 1.
}

Next, fix $n \in \N$ and let $x \in [0,\infty)^n$ be an arbitrary vector with $x_1\le \dots\le x_n$. If $x_n \ge n$ then we have
\Eq{*}{
\frac{f_{a,b}(x_1)+\dots+f_{a,b}(x_n)}n &\ge \frac{f_{a,b}(x_n)}n \ge \frac{x_n
+u_a(\floor{x_n})}n \ge \frac{\floor{x_n}
+u_a(\floor{x_n})}{\floor{x_n}} = 1+\frac{u_a(\floor{x_n})}{\floor{x_n}}.
}
Thus \eq{FGs} yields
\Eq{*}{
\frac{f_{a,b}(x_1)+\dots+f_{a,b}(x_n)}n  \ge \floor{x_n}+u_a(\floor{x_n}-1) =(f_a)_-(\floor{x_n}).
}
On the other hand if $\floor{x_1}<\floor{x_n}$ and $x_n\ge n$ then we have
\Eq{*}{
\frac{f_{a,b}(x_1)+\dots+f_{a,b}(x_n)}n &= \frac{x_1+b_{\floor{x_1}}\{x_1\}+u_a(\floor{x_1})+\dots+x_n +b_{\floor{x_n}}\{x_n\}+u_a(\floor{x_n})}n\\
&\le \frac{nx_n+n+u_a(\floor{x_1})+(n-1)u_a(\floor{x_n})}n\\
&\le \frac{n\floor{x_n}+2n+nu_a(\floor{x_n})+u_a(\floor{x_1})-u_a(\floor{x_n})}n\\
&\le \floor{x_n}+u_a(\floor{x_n})+ \frac{2n-(u_a(\floor{x_n})-u_a(\floor{x_n}-1))}n.
}
Therefore \eq{FGs2} implies
\Eq{*}{
\frac{f_a(x_1)+\dots+f_a(x_n)}n
&\le \floor{x_n}+u_a(\floor{x_n}) =f_{a,b}(\floor{x_n}).
}
Finally whenever $\max(x_1,\dots,x_n) \ge n$ we have 
\Eq{*}{
\QA{f_{a,b}}n(x_1,\dots,x_n)=\begin{cases}
\floor{\max(x_1,\dots,x_n)} & \text{ if }\floor{\min(x_1,\dots,x_n)}<\floor{\max(x_1,\dots,x_n)};\\
\frac{x_1+\dots+x_n}n & \text{ if }\floor{\min(x_1,\dots,x_n)}=\floor{\max(x_1,\dots,x_n)}.
\end{cases}
}
Therefore, in this case, the value of $\QA{f_{a,b}}n(x_1,\dots,x_n)$ does not depend on sequences $a$ and $b$. Furthermore, if $\max(x_1,\dots,x_n) < n$ then the value of $\QA{f_{a,b}}n(x_1,\dots,x_n)$ depends only on the first $n$ elements of sequences $a$ and $b$. 

Consequently, for every two nondecreasing sequences $a,a^*$ and every two sequences $b,b^*$  with all coefficients in the interval $(0,1)$  such that $a$ and $b$ coincide with $a^*$ and $b^*$ up to the index $n$, respectively, we have
 $\QA{f_{a,b}}n=\QA{f_{a^*,b^*}}n$.
\end{exa}

The next example is the extension of the one presented in the proof of \cite[Proposition~5.1]{PalPas26}. 
\begin{exa}
Let $a,b \in \R$ with $a<0<b$, $\alpha,\beta \in (0,\infty)$ and let $f \colon [a,b] \to \R$ be given by 
\Eq{*}{
f(x)=\begin{cases}
    -1+\alpha x &\text{for }x\in [a,0),\\
    0 &\text{for }x=0,\\
    1+\beta x &\text{for }x\in (0,b].\\
\end{cases}
}
We are searching for the minimal value of $n$ such that for every strictly increasing function $g \colon [a,b] \to \R$ the equality $\QA fn=\QA gn$ yields that $g$ is an affine transformation of $f$. We will use Theorem~\ref{thm:fAgB}. To this end, we need to find the value of $n$ such that   $\QA{f}n$ is passable,  
and $\interior \calI(\QA{f}n)$ is $(2,\QA{f}n)$-attracting.

To ensure attractivity it is sufficient that for all $m \in (-1,1)$ we have
\Eq{*}{
2m+(n-2)f(a)<nf_-(0) \text{ or }2m+(n-2)f(b)>nf_+(0),
}
equivalently
\Eq{*}{
2m<nf_-(0)-(n-2)f(a) \text{ or }2m>nf_+(0)-(n-2)f(b).
}
Therefore, it is sufficient that 
\Eq{*}{
nf_+(0)-(n-2)f(b)&<nf_-(0)-(n-2)f(a);\\
n-(n-2)(1+\beta b)&<-n-(n-2)(-1+\alpha a);\\
-\beta b n+2 (1+\beta b)&<-\alpha a n+2 (\alpha a-1);\\
(\alpha a-\beta b) n&<2(\alpha a -\beta b-2);\\
n&>2 \frac{2+\beta b-\alpha a}{\beta b-\alpha a}=2+\frac{4}{\beta b-\alpha a}.
}

To verify the passability, we shall show that $0$ is a passage point of $\QA{f}n$. Note that $\calI(\QA{f}n)=(a,0) \cup (0,b)$ is an open set. Thus, considering the endpoints of these intervals, it is sufficient to show that there exists $k \in \{1,\dots,n-1\}$ and $\varepsilon>0$ such that 
\Eq{*}{
\QA fn (0,\underbrace{a+\varepsilon,\dots,a+\varepsilon}_{k\text{ times}},\underbrace{\varepsilon,\dots,\varepsilon}_{n-k-1\text{ times}})<0<\QA fn (0,\underbrace{-\varepsilon,\dots,-\varepsilon}_{k\text{ times}},\underbrace{b-\varepsilon,\dots,b-\varepsilon}_{n-k-1\text{ times}}).
}
Equivalently, we get the following two inequalities
\Eq{*}{
kf(a+\varepsilon)+(n-k-1)f(\varepsilon)&<nf_-(0),\\
kf(-\varepsilon)+(n-k-1)f(b-\varepsilon)&>nf_+(0),
}
that is
\Eq{*}{
k(-1+\alpha(a+\varepsilon))+(n-k-1)(1+\beta\varepsilon)&<-n,\\
k(-1+\alpha(-\varepsilon))+(n-k-1)(1+\beta(b-\varepsilon))&>n.
}
Since both sides of the latter inequalities are continuous with respect to $\varepsilon$ and the inequalities are strict, it is sufficient to verify if they are valid for $\varepsilon=0$ (also with the strict inequality signs). Thus we verify the condition
\Eq{*}{
k(-1+\alpha a)+n-k-1&<-n;\\
-k+(n-k-1)(1+\beta b)&>n.
}
After obvious rearranging, we get
\Eq{*}{
(-2+\alpha a)k&<-2n+1;\\
(-2-\beta b)k&>-\beta b n+(1+\beta b).
}
Using the fact that $\alpha a<0$ and $\beta b>0$ we get $-2+\alpha a <0$ and $-2-\beta b<0$, and thus we can rewrite these inequalities in the equivalent forms
\Eq{*}{
k>\frac{2n-1}{2-\alpha a}\qquad\text{ and }\qquad k<\frac{\beta b n-(1+\beta b)}{2+\beta b}.
}
Now we must verify that there exists $k \in\{1,\dots,n\}$ which satisfies these inequalities. As a matter of fact, we are unable to provide a complete characterization; therefore, we focus on a sufficient condition. It is sufficient that the following three inequalities are valid
\Eq{*}{
\frac{2n-1}{2-\alpha a}&<n,\\
\frac{\beta b n-(1+\beta b)}{2+\beta b} &>1,\\
\frac{2n-1}{2-\alpha a}+1&<\frac{\beta b n-(1+\beta b)}{2+\beta b}.
}
Since $\alpha a<0$ we know that the first inequality is trivially valid.
The remaining two conditions are equivalent to
\Eq{*}{
n>\frac{3}{\beta b}+2,\quad \text{ and }\quad
2(4+\alpha \beta ab)n
< (2a\alpha-3)(2b\beta+3)+1,
}
respectively. 
Under the assumption $\alpha a \beta b <-4$, the third assertion is equivalent to
\Eq{*}{
n>\frac{3\alpha a+2\alpha a\beta b-3\beta b-4}{4+\alpha a \beta b}.
}
Thus, under the assumption $\alpha a \beta b <-4$, we get that the condition
\Eq{*}{
n > \max\bigg\{2+\frac{4}{\beta b-\alpha a},\frac{3}{\beta b}+2, 
\frac{3\alpha a+2\alpha a\beta b-3\beta b-4}{4+\alpha a \beta b}
\bigg\}
}
is sufficient to provide that the equality $\QA{f}n=\QA{g}n$ implies that $g$ is an affine combination of $f$. Even though the inequality $\alpha a \beta b <-4$ is not necessarily valid, this example is a~typical application of Theorem~\ref{thm:fAgB}.
\end{exa}

\section*{Discussion}

\begin{enumerate}[1.]
    \item Our results do not cover the case when $I\setminus C_f$ is dense. This problem remains open.
    \item Let us remind that, in view of Example~\ref{exa:1} it is not true that for every pair $(f,g)$ there exists a finite $n\in \N$ such that $\QA fn=\QA gn$ yields that $f$ is an affine transformation of $g$. Therefore each statement of this type would require some extra assumptions. 
    \item The notion of internality points appeared naturally to determine the continuity points of the generating function; however, it is reasonable to study it for a different family of means. Furthermore, it could have application in generalizing results concerning invariance of mean-type mappings. 
    \item It is quite clear that the only interesting case for using our result is when both $f$ and $g$ are not continuous; however, it is not assumed explicitly. 
    \item Due to \cite{PalPas26}, we know that the equality of weighted means implies that the generators are the affine transformations of each other, and therefore none of our results has its weighted counterpart. 
    \item The problem of giving the necessary and sufficient condition for $\QA fn=\QA gn$ in terms of generating functions remains open. 
    \item In this paper, we put some computational effort into the examples. However, in our opinion, they are necessary to understand the background of the indicated problem. 
    \item Last, but not least, let us mention that this family possesses several natural superclasses (for example, generalized Bajraktarevi\'c means). This paper also contributes to the discussion of the equality problem among them.  
\end{enumerate}

\subsection*{Funding} The research of the first author was supported by the Tempus Public Foundation and HUN-REN Hungarian Research Network.
\subsection*{Data Availability Statement} Not applicable.
\subsection*{Competing interests} The authors have no relevant financial or non-financial interests to disclose.

\end{document}